	\definecolor{lightgray}{gray}{0.8}
	\newcolumntype{L}{>{\raggedleft}p{0.14\textwidth}}
	\newcolumntype{R}{p{0.8\textwidth}}
\newtheorem{theorem}{Theorem}
\newtheorem{lemma}[theorem]{Lemma}
\newtheorem{proposition}[theorem]{Proposition}
\theoremstyle{remark}
\theoremstyle{definition}\newtheorem{definition}[theorem]{Definition}
\newcommand{\R}{\mathbb R}
\newcommand{\id}{\operatorname{id}}
\newcommand{\sd}{\kern-.1em\cdot\kern-.1em}
\def\3{\ss}
\def\Hess{\operatorname{Hess}}
\def\aplim{\operatorname{aplim}}
\def\tr{\operatorname{tr}}
\def\D{\partial}
\def\2pithird{\frac{2\pi}{3}}
\def\D{\partial}
\title{Conformal deformations of CAT(0) spaces}
\author{Alexander Lytchak, Stephan Stadler}
\begin{document}

\maketitle

%\tableofcontents

\begin{abstract}
We show that the class of CAT(0) spaces is closed under  suitable conformal changes.
 In particular, any CAT(0) space admits a large variety of non-trivial deformations.
\end{abstract}

\section{Introduction}

\subsection{Main result}

Let  $M$ be a smooth manifold with Riemannian metric $g$. It is well known that a conformal change
$\bar g=e^{2f}\cdot g$ affects the sectional curvature $K$ in the following way:
\[e^{2f}\bar K=K-\left(\Hess f(X,X)+\Hess f(Y,Y)+|\nabla f|_g^2-
g(\nabla f,X)^2-g(\nabla f,Y)^2\right),\]
where $X,Y$ is  an orthonormal basis of the respective tangent plane.
So if $f$ is a convex function
%\footnote{A $\mathcal{C}^2$ function $f$ is called {\em 2-convex},
%if the sum of the lowest two eigenvalues of $\Hess f$ is non-negative.}
then $\bar K\leq e^{-2f}\cdot K$. Since complete, simply connected Riemannian manifolds of non-positive sectional curvature, so-called Hadamard manifolds,
support many convex functions, the above formula shows that these spaces admit non-trivial conformal deformations.
In this paper we generalize this result to synthetic analogs of Hadamard manifolds, namely CAT(0) spaces.
For a length space $X$ and a continuous function $f$ on $X$
we obtain a \emph{conformally equivalent length space} $e^f\sd X$ by stretching the lengths of curves according to the weight $e^f$.

\begin{theorem}\label{thm:main}
Let $X$ be a CAT(0) space and  $f$ be a function on $X$, continuous, convex and bounded from below.
Then  the conformally equivalent space  $e^f\sd X$ is CAT(0).
\end{theorem}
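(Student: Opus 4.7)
The plan is to check that $Y := e^f \sd X$ is a complete simply connected length space, reduce the global CAT(0) condition to a local one via Alexandrov's globalization theorem, and then verify the local condition using convexity of $e^f$ along $X$-geodesics.

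First, since $f$ is bounded below, every rectifiable curve in $X$ has $Y$-length at least a fixed positive multiple of its $X$-length, so $d_Y\geq c\, d_X$ for some $c>0$; continuity of $e^f$ provides a matching upper bound on any set where $f$ is bounded. Hence $X$ and $Y$ are homeomorphic, $d_Y$-Cauchy sequences are $d_X$-Cauchy, and $Y$ inherits completeness from $X$. Since $X$ is CAT(0) and therefore contractible, $Y$ is simply connected; by construction $Y$ is a length space. Alexandrov's globalization theorem for complete simply connected length spaces then reduces the claim to producing a CAT(0) neighborhood in $Y$ around each point.

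For the local step the key observation is that $e^f$ is itself a convex function on $X$, being the composition of the convex $f$ with the increasing convex exponential. Along any $X$-geodesic $\gamma$ of length $L$ from $x$ to $y$ this yields the chord estimate $\int_0^L e^{f\circ\gamma}\,ds \leq \tfrac{L}{2}\bigl(e^{f(x)}+e^{f(y)}\bigr)$, controlling $Y$-lengths of $X$-geodesics from above in terms of endpoint data. Plugging such convexity-driven bounds into a metric characterization of CAT(0) (for instance the Bruhat--Tits CN-inequality applied to an $X$-midpoint configuration in a small ball) should translate the CAT(0) inequality holding in $X$ into its counterpart for the corresponding $Y$-configuration.

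The main obstacle is precisely this local step. There is no differential equation describing $Y$-geodesics, and they do not coincide with $X$-geodesics, so one has to work directly with the variational description of $d_Y$. The task is to choose test curves --- most likely $X$-geodesics, or broken $X$-geodesics through suitable midpoints --- whose $Y$-length is cleanly controlled by convexity of $f$, and then verify that the resulting estimates match the target inequality after passing from $X$-midpoints to $Y$-midpoints. A secondary subtlety is the a priori existence of $Y$-midpoints, since general CAT(0) spaces need not be locally compact; this can be handled by working with $\eps$-midpoints and a limiting argument.
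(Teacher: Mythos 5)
Your soft reductions are fine: the two-sided Lipschitz comparison between $d_X$ and $d_{e^f}$, completeness, simple connectivity, and the Cartan--Hadamard reduction to a local CAT(0) condition are all correct and unproblematic. But the entire content of the theorem sits in the step you label ``the main obstacle,'' and your proposal does not contain an argument for it --- it only names the difficulty and expresses hope that convexity estimates ``should translate'' into the CN-inequality. The concrete problem is this: the chord estimate $\int_0^L e^{f\circ\gamma}\,ds \le \tfrac{L}{2}(e^{f(x)}+e^{f(y)})$ obtained from convexity of $e^f$ along an $X$-geodesic gives only an \emph{upper} bound for $d_{e^f}(x,y)$, because it exhibits one competitor curve. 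Every comparison inequality (CN, four-point, triangle comparison) also requires \emph{lower} bounds on certain distances in the new metric, and a lower bound on $d_{e^f}$ requires controlling the $e^f$-length of \emph{all} curves joining two points, not just $X$-geodesics. Convexity of $f$ gives no such control, and localizing does not help: a small ball in $e^f\sd X$ is just a conformal change of a small ball in $X$ with nearly constant factor, so the local problem is exactly as hard as the global one. This is precisely the obstruction the paper flags in its introduction --- under a conformal change ``information about all geodesics is lost'' --- and it is why no direct comparison-geometry argument of the kind you sketch is known.

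The paper circumvents geodesics entirely. It uses a recognition criterion (Proposition \ref{prop:major}): a complete length space is CAT(0) if every rectifiable Jordan curve is majorized by some CAT(0) space. Given a Jordan curve $\Gamma$ in $e^f\sd X$, one solves Plateau's problem for the corresponding curve in $X$, obtaining a conformal harmonic disc $u$ with log-subharmonic conformal factor $\lambda$ whose intrinsic completion $Y$ is a CAT(0) majorization of $\Gamma$ in $X$ (Theorems \ref{thm:plateau} and \ref{thm:intrinsic}). The decisive input is Fuglede's theorem (Theorem \ref{thm:restrict}): $f\circ u$ is subharmonic because $u$ is harmonic and $f$ is convex, so $e^{f\circ u}\cdot\lambda$ is again log-subharmonic, and Reshetnyak's theorem on conformal changes of planar domains (Theorem \ref{thm:resh}) shows the reweighted disc is still non-positively curved, hence still a CAT(0) majorization --- now of $\Gamma$ inside $e^f\sd X$. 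If you want to complete a proof along your lines you would need a genuinely new idea for bounding $d_{e^f}$ from below; as written, the proposal has a gap at its central step.
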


While the class of CAT(0) spaces is closed under gluings along convex subsets, warped products with convex warping functions and ultralimits,
Theorem \ref{thm:main} is the first result which allows to deform a general CAT(0) space. In contrast, it is still unknown if Alexandrov
spaces with curvatures bounded from below admit any non-trivial deformations, see \cite{Psemi}, Section 9.  We would like to mention the recently achieved control
 of synthetic Ricci curvature bounds under conformal changes by Theo Sturm \cite{Sturm}.
There is a reason why Theorem \ref{thm:main} is difficult
and inaccessible by known techniques. As opposed to known constructions, for instance warped products  \cite{ABcurv},
when changing a metric conformally, information about all geodesics is lost.

To overcome this problem we rely on the theory of minimal surfaces
in metric spaces and the recently obtained structural results from \cite{LWint}, \cite{LWcurv} and \cite{PS}.
Instead of looking at geodesic triangles we use a CAT(0) recognition  statement based on Reshetnyak's majorization theorem. More precisely, we
show that any rectifiable Jordan curve in our conformally changed  space is majorized by some two-dimensional non-positively curved disc.
In order to find this disc, we  solve Plateau's problem for a given Jordan curve in the original space $X$. By the aforementioned structure results,
the minimal disc obtained in this way will intrinsically be non-positively curved.
If we now perform a conformal change, our disc will not be minimal anymore. However, we will prove that it remains non-positively curved after the conformal change. As a main tool we rely on
the control of  curvature of flat surfaces under conformal changes  obtained  by Yuri Reshetnyak in the sixties.
In order to apply Reshetnyak's results, we will
 use the fact that the pull-back of a convex function by a harmonic map results in a subharmonic function \cite{F}.

We expect our result to be useful for extremal problems within the realm of non-positive curvature,  similar to
  \cite{Psemi}, Section 9.

It seems to be only a  technical issue to extend our result in the following directions, which we leave to interested readers.

\begin{itemize}
\item Theorem \ref{thm:main}  extends to non-zero curvature bounds $K$ and semi-convex functions $f$, according to the
formula valid in the smooth case.
\item  Theorem \ref{thm:main} should be valid if $f$ is a semi-convex function, such that the gradient flow of $-f$
does not increase areas.  In the smooth case this corresponds to the assumption that the sum of the first two eigenvalues of  the Hessian of
$f$  is non-negative.
\item  A variant of  Theorem \ref{thm:main} should be valid if $f$  is merely lower semi-continuous and not bounded from below.
\end{itemize}

\subsection{Structure of the paper}

In Section 2 we introduce notation and recall basic facts from metric geometry.
 We include an easy CAT(0) recognition  statement  used in the proof of
our main result.

In Section 3 we introduce conformal changes for length spaces, discuss
Reshetnyak's findings on conformal changes of Euclidean domains
and study the change of upper curvature bounds under repeated conformal changes.

In Section 4 we review basics of Sobolev  maps with metric space targets.
We recall the solution of Plateau's problem in metric spaces and the intrinsic structure of
minimal discs.  We show how minimal discs yield majorizations of Jordan curves
in CAT(0) spaces. Finally, we give the proof of our main result.

\subsection{Acknowledgment}
The first and second author were partially supported by DFG grant  SPP 2026.

The authors are grateful to Stefan Wenger for very helpful comments.

\section{Metric geometry}

\subsection{Basics and notation}

We refer the reader to \cite{BBI}, \cite{B} and \cite{AKP} for basics on metric geometry and
CAT(0) spaces. Here we just agree on notation and recall some important facts.
As usual $\R^2$ will denote the Euclidean plane.
We will let $D\subset\R^2$ be the open Euclidean unit disc, $\bar D\subset\R^2$ the closed Euclidean unit disc and
$S^1=\D\bar D\subset\R^2$ the unit circle.

Let $X$ be a metric space. The metric on $X$ will be denoted by $|\cdot,\cdot|_X$ and if there is no risk of
confusion by $|\cdot,\cdot|$. If $Y$ is another metric space and $f:X\to Y$ is a map, then $f$ is {\em Lipschitz continuous} if
there is a constant $C>0$ such that $|f(x),f(y)|_Y\leq C\cdot|x,y|_X$. If the constant $C$ can be chosen to be one, then
$f$ will be called {\em short}.

The length of a rectifiable curve $\gamma$ in $X$ is denoted by $L(\gamma)$.
$X$ is called a {\em length space}
%or {\em intrinsic space},
 if the distance between any two points
is equal to the greatest lower bound for lengths of curves connecting the respective points.
A curve $c:[a,b]\to X$ will  be called {\em geodesic} if  it is an isometric embedding.
 The space $X$ itself will be called {\em geodesic} if any two points in $X$
are joined by a geodesic.

A {\em triangle} in $X$ consists of three points and three geodesics connecting them.
The three geodesics that make a triangle are called its {\em sides}.
For every triangle $\triangle$, we can find a \emph{comparison triangle} $\tilde\triangle\subset\R^2$
such that corresponding sides have equal lengths. If $X$ is a geodesic space such that
for each triangle $\triangle$ in  $X$ the distances between points on $\triangle$
do not exceed the distances between the corresponding points on $\tilde\triangle$, then $X$
is called a {\em CAT(0) space}. Examples of CAT(0) spaces include simply connected Riemannian manifolds of
non-positive sectional curvature, metric trees and Euclidean buildings. CAT(0) spaces enjoy the uniqueness of geodesics
between points. Moreover, CAT(0) spaces contain many convex subsets: geodesics, metric balls, horoballs, etc.
A function $f$ on a CAT(0) space is called {\em convex}, if it restricts to a conventional convex function
on each geodesic. Since the distance to a closed convex subset
is a convex function, any CAT(0) space supports many convex functions.

 For us it will be important that CAT(0) spaces can be recognized
without referring to geodesic triangles.
By a \emph{Jordan curve} in a metric space $X$ we denote a subset
homeomorphic to a circle.  We say that a metric space $Y$ majorizes a  rectifiable Jordan curve $\Gamma$ in a metric space $X$
if there exists a short map $P:Y\to X$ which sends a Jordan curve $\Gamma'  \subset Y$ bijectively in an arc length preserving way onto $\Gamma$.

\begin{proposition}  \label{prop:major}

Let $X$ be a complete length metric space.  If any rectifiable Jordan curve $\Gamma$ in $X$
is majorized by some CAT(0) space $Y_{\Gamma}$, then $X$ is CAT(0).

\end{proposition}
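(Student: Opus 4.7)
The strategy is to verify directly that $X$ satisfies the two defining properties of a CAT(0) space: it is a geodesic space, and every geodesic triangle satisfies the CAT(0) comparison inequality. Both verifications are reduced to the majorization hypothesis by producing suitable Jordan curves and transferring the comparison from $Y_\Gamma$ via the short map.

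First I would handle triangle comparison. Given a geodesic triangle $\triangle=(abc)$ in $X$ whose trace $\Gamma$ is a Jordan curve, invoke the hypothesis to obtain a CAT(0) space $Y_\Gamma$ and a short map $P:Y_\Gamma\to X$ sending a Jordan curve $\Gamma'\subset Y_\Gamma$ bijectively onto $\Gamma$ in an arc-length preserving way. The crucial point is that each of the three arcs of $\Gamma'$ lying over a side of $\triangle$ is itself a geodesic in $Y_\Gamma$: for any two points $u',v'$ on such an arc, the chain
\[
|P(u'),P(v')|_X \;\leq\; |u',v'|_{Y_\Gamma} \;\leq\; L_{\Gamma'}(u',v') \;=\; L_\Gamma(P(u'),P(v')) \;=\; |P(u'),P(v')|_X
\]
(using shortness of $P$, the bound of intrinsic distance in $Y_\Gamma$ by arc length along $\Gamma'$, arc-length preservation, and that the corresponding side in $X$ is a geodesic) forces equality throughout. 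Hence $\Gamma'$ bounds a geodesic triangle $\triangle'$ in $Y_\Gamma$ with the same side lengths as $\triangle$. Combining CAT(0) comparison for $\triangle'$ in $Y_\Gamma$ with shortness of $P$ then yields CAT(0) comparison for $\triangle$.

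Second, I would establish that $X$ is geodesic. Since $X$ is a complete length space, it suffices to produce a midpoint of any two points $p\neq q$. Pick rectifiable curves $\alpha_n$ from $p$ to $q$ with $L(\alpha_n)\to |p,q|$ and let $m_n$ be the point halving $\alpha_n$ in arc length; these are $\epsilon_n$-midpoints with $\epsilon_n\to 0$. To establish that $(m_n)$ is Cauchy, pair $m_n$ with $m_k$ and form the closed curve obtained by traversing $\alpha_n$ from $p$ to $q$ and then $\alpha_k$ back from $q$ to $p$. When this is a rectifiable Jordan curve, its majorization by a CAT(0) space $Y$ produces preimages $\tilde m_n,\tilde m_k$ that are $\epsilon$-midpoints of the preimages of $p$ and $q$ in $Y$; the uniform convexity of midpoints in the CAT(0) space $Y$ forces $|\tilde m_n,\tilde m_k|_Y$ to be small in $\epsilon_n,\epsilon_k$, and shortness of the projection transfers the bound to $X$. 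Completeness then yields a genuine midpoint.

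The main obstacle is that the closed curves arising in both steps need not be embedded. In the triangle step, degenerate configurations where two sides of $\triangle$ overlap nontrivially must be treated separately, either by approximation with nearby non-degenerate Jordan triangles using continuity of distance, or by decomposing into non-degenerate pieces where the Jordan case applies. In the midpoint step, the curves $\alpha_n$ and $\alpha_k$ may share subarcs, so the concatenation can fail to be Jordan; this requires small perturbations of the $\alpha_n$ to general position, or a density argument selecting nearly length-minimizing sequences whose concatenations are embedded. Once these technicalities are handled, both parts reduce to a direct application of the CAT(0) definition inside the majorizing space $Y_\Gamma$.
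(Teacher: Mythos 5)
Your overall architecture matches the paper's: both proofs split into (i) the geodesic case, where the majorization hypothesis is converted into triangle comparison, and (ii) the verification that $X$ is geodesic at all. The difference is in the weighting. The paper disposes of (i) entirely by citing Lemma~3.3 of \cite{LWcurv} and spends its whole proof on (ii); you re-derive (i) from scratch (your equality chain showing that the arcs of $\Gamma'$ over the sides of $\triangle$ are geodesics in $Y_\Gamma$ is correct and is essentially the content of that lemma), and for (ii) you use approximate midpoints plus completeness where the paper argues that two nearly length-minimizing simple arcs with common endpoints must stay uniformly close to each other. Your midpoint formulation is a fine alternative in the clean case where the concatenation of $\alpha_n$ and $\alpha_k$ is a Jordan curve containing $p$ and $q$.

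The weak point is exactly the part you defer to ``technicalities,'' because that is the only part the paper actually proves, and your proposed remedies do not work as stated. ``Small perturbations to general position'' and ``approximation by nearby non-degenerate Jordan triangles using continuity of distance'' are not available in a general complete length space: there is no ambient structure in which to perturb, and before $X$ is known to be CAT(0) geodesics between nearby points need not be close, so a comparison inequality proved for approximating triangles does not obviously pass to the limit. The device the paper uses instead is purely topological: if a point $p$ on one arc has positive distance from the other arc, one extracts simple subarcs $\gamma^\pm$ of the two curves with $p\in\gamma^+$ whose union \emph{is} a Jordan curve, majorizes that, and concludes that $\gamma^+$ and $\gamma^-$ stay close because in the CAT(0) majorizing space two arcs almost realizing the distance between their (new, possibly different) endpoints are uniformly close. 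Note that after this extraction the endpoints are no longer $p$ and $q$, so your midpoint comparison in $Y$ no longer applies verbatim and the argument degenerates into the paper's ``arcs stay close'' statement anyway; you then still need the extra (easy) observation that a point of $\alpha_k$ close to the almost-midpoint $m_n$ is itself close to $m_k$. So the proposal is salvageable, but the step you would need to fill in is precisely the subarc-extraction argument, not a perturbation.
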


\begin{proof}
%[Sketch of proof.]
%The only if statement is clear, since one can always choose $X$ itself as a majorizing space of any Jordan curve in it.
%The "if direction"      for geodesic   spaces    is exactly Lemma 3.3 in \cite{LWcurv}. In particular, it covers the locally compact case.
If $X$ is geodesic the  statement is exactly Lemma 3.3 in \cite{LWcurv}. In particular, this includes the case of a locally compact space $X$.
%
%contains the only if direction in the general case and the whole statement
%if $X$ is locally compact.

 In order to prove the "if direction" in the  not locally compact case, it remains to prove that any $X$ satisfying the
 majorization assumption must be  geodesic.
 %{\color{blue} We only have to show that if $X$ satisfies the majorization assumption, then its completion $\overline{X}$ is geodesic.
 So it is enough to prove the following. If two simple curves $c^+$ and $c^-$ with common endpoints $x$ and $y$ almost realize the distance $|x,y|$,
 then their images $I^+$ and $I^-$ are close to each other. Assume that $p$ is a point on $I^+$ with positive distance from $I^-$.
 Then there exist simple arcs $\gamma^\pm\subset I^\pm$ such that $p\in \gamma^+$ and the union $\Gamma:=\gamma^+\cup\gamma^-$ is a Jordan curve.
 Let $Y_\Gamma$ be a CAT(0) space which majorizes $\Gamma$. Further, let $\Gamma_0\subset Y_\Gamma$ be the  Jordan curve corresponding to $\Gamma$.
 Then $\Gamma_0$ decomposes into a union of two arcs $\gamma_0^\pm$ according to the decomposition of $\Gamma$.
 By assumption, $\gamma_0^+$ and $\gamma_0^-$ almost realize the distance between their endpoints. Since $Y_\Gamma$ is CAT(0),
 they have to stay close to each other. Therefore the same is true for $\gamma^+$ and $\gamma^-$.
  \end{proof}

A  metric space $X$ is called {\em non-positively curved}
if every point in $X$ has a neighborhood which is CAT(0).

\section{Conformal changes}

\subsection{Generalities}

Let $X$ be a length space and  $f:X\to (0,\infty)$ be a  continuous function.
We obtain a new length structure on $X$ as follows. As admissible curves we take Lipschitz continuous paths $\gamma:[a,b]\to X$
and define their $f$-length in $X$     by
\begin{equation} \label{eq:length}
L_f(\gamma) = L^X_f(\gamma)=\int_a^b f(\gamma(t)) \cdot |\dot\gamma(t)| \,dt \,,
\end{equation}
where $|\dot\gamma(t)|$ denotes the velocity of the curve $\gamma$ at  time $t$.
%If there is no risk of confusion we will sometimes abbreviate to $L_f(\gamma)$.

Since $f$ is locally bounded away from $0$, the associated
pseudo metric
\begin{equation} \label{eq:dist}
d_f (x,y)=  \inf _{\gamma} \left\{ L_f(\gamma) \; ; \;  \gamma \;  \text{Lipschitz curve from  } x \; \text{ to } \;  y\right\}
\end{equation}
is indeed a metric. We set  $f\sd X:=(X,d_f)$ and call it the \emph{metric space conformally    equivalent to $X$ with conformal factor $f$}.

Because $f$ is locally bounded away from $0$ and $\infty$, the identity map  $\id_f:X\to f\sd X$ is  a  locally bi-Lipschitz homeomorphism.
 If $f$  is bounded from below by a positive constant then any Cauchy sequence in $f\sd X$ is Cauchy in $X$. Hence, if $X$ is
complete and $f$ bounded from below by a positive constant then $f\sd X$ is complete.

 Due to the continuity of $f$, for any Lipschitz curve $\gamma :[a,b]\to X$, the composition $\id_f\circ \gamma :[a,b]\to f\sd X$
 has  at almost all times $t$ the velocity $f(\gamma (t))\cdot  |\dot\gamma(t)| $ and  length equal to $L_f^X (\gamma )$ with
respect to the metric $d_f$.  In particular, $(X,d_f)$ is a length space.

\subsection{Surfaces}

In the case of flat domains the curvature of conformally changed metrics has been investigated
in detail by Yuri Reshetnyak, see \cite{R} and the references therein.  It turns our that in this case it is even possible
to relax the continuity and positivity assumptions on conformal factors.

Recall that a function $f:U\to [-\infty,\infty)$ on a domain $U\subset \R^2$  is called {\em subharmonic},
if it is upper semi-continuous, contained in $L^1_{loc}$ and satisfies $$f(z)\leq\frac{1}{\pi s^2}\int_{B_s(z)}f(w) \, dw$$
for all balls $B_s(z)\subset U$. On the other hand, a function $f\in L^1_{loc}$ has a  subharmonic representative if and only if
$\Delta f\geq 0$ weakly.

 A function $f:U\to[0,\infty)$ is called {\em log-subharmonic}, if $\log(f)$ is subharmonic.
Each log-subharmonic function is locally bounded.
The set of log-subharmonic functions is closed under products.

For a log-subharmonic function $f$ one can use the same formulas \eqref{eq:length} and \eqref{eq:dist}
to define the conformally changed metric on $U$. Indeed, we have the following result due to Reshetnyak,
see Theorem 7.1.1 in \cite{R}, compare also Theorem 8.1 in \cite{LWcurv}.

\begin{theorem}\label{thm:resh}
 Let $U\subset\R^2$ be a domain and $f$ a log-subharmonic function on $U$.
 Then $f\sd U$ has non-positive curvature and $\id_f:U\to f\sd U$
 is a homeomorphism.
\end{theorem}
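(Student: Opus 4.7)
The plan is to first treat the case of a smooth, strictly positive log-subharmonic factor via the classical Gauss curvature formula for conformal changes of flat metrics, and then deduce the general case by approximating $\log f$ through mollification, using stability of the upper curvature bound under pointwise monotone convergence of the conformal factor.

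For smooth, strictly positive $f$, write $u = \log f$. Then $f\sd U$ is Riemannian with metric $e^{2u}\, g_{\mathrm{Eucl}}$ and Gaussian curvature $K = -e^{-2u}\Delta u$. Subharmonicity of $u$ yields $\Delta u \geq 0$ pointwise, hence $K \leq 0$, so by the local Cartan-Hadamard theorem for surfaces $f\sd U$ is non-positively curved, and $\id_f$ is a smooth diffeomorphism.

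For general log-subharmonic $f$, set $u = \log f$, fix a smooth radial mollifier $\phi_\varepsilon$, and put $u_\varepsilon := u \ast \phi_\varepsilon$ on $U_\varepsilon := \{z \in U : \mathrm{dist}(z, \partial U) > \varepsilon\}$. Standard potential theory gives that each $u_\varepsilon$ is smooth and subharmonic, and the sub-mean-value property forces $u_\varepsilon \searrow u$ pointwise as $\varepsilon \searrow 0$. Hence $f_\varepsilon := e^{u_\varepsilon}$ is smooth, strictly positive, log-subharmonic, and decreases pointwise to $f$. By monotone convergence $L_{f_\varepsilon}(\gamma) \searrow L_f(\gamma)$ for every Lipschitz curve $\gamma$, and taking infima gives $d_{f_\varepsilon} \searrow d_f$. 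On any relatively compact $V \Subset U$ the factors $f_\varepsilon$ are uniformly bounded above, so the metrics stay uniformly bi-Lipschitz to the Euclidean one, and pointwise monotone convergence upgrades to uniform convergence on $V$. Since each $f_\varepsilon \sd U_\varepsilon$ is non-positively curved by the smooth step and the four-point CAT(0) condition is closed under uniform convergence of length metrics on compacts (equivalently via stability under ultralimits, or via the majorization criterion of Proposition~\ref{prop:major}), $f\sd U$ is non-positively curved as well. The map $\id_f$ is locally Lipschitz by the local upper bound on $f$; for continuity of its inverse, one exploits that the zero set of $f$ is polar with vanishing one-dimensional Hausdorff measure, so nearby points can be joined by curves avoiding it at arbitrarily small Euclidean cost, giving a local lower bound on $d_f$ in terms of the Euclidean distance.

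The main obstacle is the absence of a positive lower bound on $f$: a log-subharmonic function may vanish on a polar set, and one must argue carefully both that $d_f$ is genuinely a metric inducing the Euclidean topology and that $\id_f$ does not collapse nontrivial Euclidean sets to a point. Coupled with the need to verify that the monotone convergence $d_{f_\varepsilon}\to d_f$ is strong enough to transfer the CAT(0) inequality to the limit on complete local neighborhoods, this is where the real technical work lies, essentially the content of Reshetnyak's original $L^1_{\mathrm{loc}}$-based treatment of log-subharmonic conformal factors.
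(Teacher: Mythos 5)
The paper offers no proof of Theorem \ref{thm:resh} at all: it is imported wholesale as Theorem 7.1.1 of \cite{R} (compare Theorem 8.1 of \cite{LWcurv}), so your proposal is really competing with Reshetnyak's original argument rather than with anything in the text. Your skeleton --- mollify $u=\log f$ to obtain smooth subharmonic $u_\varepsilon\searrow u$, use the formula $K=-e^{-2u_\varepsilon}\Delta u_\varepsilon\le 0$ in the smooth case, and pass to the limit --- is the standard and correct one. But the two steps you yourself flag as ``the real technical work'' are precisely the content of the theorem, and the sketches you offer for them do not close.

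First, the continuity of $\id_f^{-1}$, equivalently the non-degeneracy of $d_f$. Exhibiting curves between nearby points that avoid the polar set $\{f=0\}$ can only produce \emph{upper} bounds on $d_f$, since $d_f$ is an infimum over curves; and even along a curve missing $\{f=0\}$ the factor $f$ need not be bounded away from zero. What is actually needed is a positive lower bound on the $f$-length of \emph{every} curve joining a point to the complement of a Euclidean ball around it. This is obtained by a length--area argument: integrate over an annulus in polar coordinates, apply Jensen's inequality to $f=e^{u}$ along radii or circles, and use the sub-mean-value property of $u$ to bound the resulting averages from below. Nothing in your proposal supplies this estimate, and it is the heart of the matter. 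Second, ``pointwise monotone convergence upgrades to uniform convergence on $V$'' is false as stated: Dini's theorem requires a continuous limit, whereas $f$ is merely upper semicontinuous and may vanish on a nonempty polar set. The quantity that does converge uniformly on compacta is the distance $d_{f_\varepsilon}\searrow d_f$, but its continuity with respect to the Euclidean topology is again the homeomorphism statement of the first point, so the two issues cannot be decoupled as you suggest. Finally, to pass the curvature bound to the limit you need one and the same neighborhood to be CAT(0), with its induced metric, for all $\varepsilon$ simultaneously; non-positive Gauss curvature of an incomplete smooth surface does not make a fixed Euclidean ball CAT(0) without a convexity (or majorization) argument, so a uniform lower bound on the size of the comparison neighborhoods must be established before the four-point inequality survives the limit.
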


We can now state and prove the main result of this section.

\begin{lemma} \label{lem:cat}
 Let $U$ be a domain in $\R^2$ with its intrinsic metric.
Let  $\lambda $ be a log-subharmonic function on $U$ and let $\lambda \sd U$ be
the conformally changed space. Let $Y$ denote the completion of $\lambda \sd U$.
Finally, let $f$ be a positive, continuous function on $Y$ such that the restriction of $f$ to $U$
is log-subharmonic.  Then $f\sd Y$ is CAT(0).
\end{lemma}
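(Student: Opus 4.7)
The plan is to realize the iterated conformal change $f\sd Y$ as a single conformal change of the flat domain $U$ with factor $\mu:=\lambda\cdot(f|_U)$, and then apply Reshetnyak's Theorem \ref{thm:resh}. Since log-subharmonic functions on $U$ are closed under products (as recalled just before Theorem \ref{thm:resh}) and both $\lambda$ and $f|_U$ are log-subharmonic by hypothesis, so is $\mu$. Theorem \ref{thm:resh} then yields that $\mu\sd U$ is non-positively curved and that $\id_\mu:U\to\mu\sd U$ is a homeomorphism.

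The key step is to identify $f\sd Y$ with the metric completion of $\mu\sd U$. For any Lipschitz curve $\gamma:[a,b]\to U$, the velocity of $\id_\lambda\circ\gamma$ in $\lambda\sd U$ is $\lambda(\gamma(t))|\dot\gamma(t)|$ at almost every $t$, so its $f$-length measured inside $Y$ equals
\[
L^Y_f(\id_\lambda\circ\gamma)=\int_a^b f(\gamma(t))\lambda(\gamma(t))|\dot\gamma(t)|\,dt=L^U_\mu(\gamma).
\]
Since $\id_\lambda$ embeds $U$ as a dense subset of $Y$ and $f$ is positive and continuous on $Y$, every rectifiable path in $f\sd Y$ between two points of $U$ can be approximated by Lipschitz paths lying in $U$ whose $f$-lengths converge to that of the original path. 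Consequently $d_f|_{U\times U}=d_\mu$, and $f\sd Y$ is the metric completion of $\mu\sd U$.

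Completions of non-positively curved length spaces remain non-positively curved (triangle comparison passes to limits of dense triangles), so $f\sd Y$ is at least NPC. To pass from NPC to the global CAT(0) property one uses that in the setting where this lemma is applied $U$ will be simply connected (a topological disc), so that $\mu\sd U$ is simply connected and NPC, hence CAT(0) by Cartan--Hadamard, whence its completion $f\sd Y$ is CAT(0) as well. The main obstacle I foresee is the approximation argument in the second step: one must verify that curves passing through $Y\setminus U$ do not provide shortcuts in $f\sd Y$ between interior points of $U$. This should follow from the density of $U$ in $Y$ and the local boundedness of $f$, but requires some care when paths approach the boundary of $U$.
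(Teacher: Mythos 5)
Your reduction to a single conformal change is exactly the paper's strategy: the product $\mu=\lambda\cdot(f|_U)$ is log-subharmonic, Theorem \ref{thm:resh} makes $\mu\sd U$ non-positively curved, and the identification of $f\sd Y$ with the completion of $\mu\sd U\cong f\sd(\lambda\sd U)$ via approximation of Lipschitz curves is the same computation the paper carries out (the ``shortcut'' worry you raise is resolved precisely by approximating any Lipschitz curve in $Y$ by curves in $U$ with converging $f$-lengths, using density of $U$ and continuity and positivity of $f$). Up to that point the proposal is sound.

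The gap is the final step, which is where the real content of the lemma lies. The assertion that completions of non-positively curved length spaces remain non-positively curved is false in general: non-positive curvature is a \emph{local} condition, the points added by the completion need not admit any CAT(0) neighbourhood, and triangles in the completion are not simply limits of triangles in the dense subspace, since geodesics of the completion may pass through the new points. (A flat cone of total angle less than $2\pi$ with its vertex removed is non-positively curved, while its completion is not.) Your alternative route also fails: the metric Cartan--Hadamard theorem requires a \emph{complete} simply connected non-positively curved length space, and $\mu\sd U$ is precisely not complete; completing first and then applying Cartan--Hadamard returns you to the previous problem. Deferring to ``the setting where the lemma is applied'' does not prove the lemma as stated --- and note that for a non-simply-connected domain, say an annulus with $\lambda=f=1$, the conclusion genuinely fails, so something beyond ``NPC plus completion'' must enter. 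The paper closes exactly this gap by citing Proposition 12.1 of \cite{LWcurv}, a substantive structural result which passes from non-positive curvature of a two-dimensional (disc-type) length space to the CAT(0) property of its completion. You need either that citation or an argument of comparable strength; the elementary steps you propose do not supply it.
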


\begin{proof}
Any Lipschitz curve $\gamma :[a,b]\to Y$ can be approximated by Lipschitz curves
$\gamma _n:[a,b]\to \lambda \sd U$  such that the lengths of $\gamma _n$ converge to the length of $\gamma$.
Since the conformal factor is continuous, this implies that $f\sd Y$ is the completion of the length space
$f\sd (\lambda \sd U)$.

Therefore, the statement of the theorem follows from \cite{LWcurv}, Proposition 12.1, once we can prove that
$f\sd (\lambda \sd U)$ is non-positively curved.

By assumption,  $(f\sd\lambda)$ is  log-subharmonic in $U$.  We  apply Theorem \ref{thm:resh}  and deduce that
$(f\sd \lambda) \sd U$ is non-positively curved. Hence, we only need to verify that
$(f\sd\lambda)\sd U$ is isometric to $f\sd(\lambda\sd U)$.

 Due to Theorem \ref{thm:resh}
 the identity maps $U\to (f\sd\lambda)\sd U$ and $ U\to\lambda\sd U$ are homeomorphisms.
  Moreover, since log-subharmonic
 functions are locally bounded, both maps
 are locally Lipschitz continuous.
  By the continuity and positivity of $f$, the identity $\lambda \sd U \to  f\sd (\lambda \sd U)$ is a locally bi-Lipschitz homeomorphism.

  Consider the natural map $I:(f\sd\lambda)\sd U\to f\sd(\lambda\sd U)$, the composition of the homeomorphisms above.
  Any Lipschitz curve $\gamma :[a,b]\to U$  in the original disc $U$ has  the same length in $(f\sd\lambda)\sd U$ and in
  $ f\sd(\lambda\sd U)$.    This observation, the definition of the distance in the conformally changed metrics and  the
	continuity of $f$ now imply  that $I$ is an isometry.

  This finishes the proof of the lemma.
\end{proof}

\section{Sobolev discs and energy}
\subsection{Generalities  and harmonic discs}
By now there exists a well established theory of Sobolev maps with
%with Euclidean domains and
 values in metric spaces.
We refer the reader to \cite{KS}, \cite{J},  \cite{HKST}, \cite{LWplateau} and restrict our revision to the special case needed in this paper.

Let $X$ be a complete metric space. We let $L^2(D,X)$ be the set of measurable and essentially
separably valued maps $u:D\to X$ such that for some and thus every $x\in X$ the function $u_x(z):=|x,u(z)|$
belongs to the classical space $L^2(D)$ of  square-integrable functions on $D$.

\begin{definition}
 A map $u\in L^2(D,X)$ belongs to the Sobolev space $W^{1,2}(D,X)$ if there exists  $h\in L^2(D)$ such that for every $x\in X$
 the composition $u_x$
 is contained in the classical Sobolev space $W^{1,2}(D)$ and its weak gradient satisfies $|\nabla u_x|\leq h$ almost everywhere.
 A map $u\in W^{1,2}(D,X)$ will  occasionally be called a {\em Sobolev disc}.
\end{definition}

Each Sobolev disc $u$ has an associated {\em trace} $\tr(u)\in L^2(S^1,X)$, see
 \cite{KS}, \cite{LWplateau}. If $u$ extends continuously to a map $\hat u$ on $\bar D$, then
$\tr(u)$ is represented by the restriction $\hat u|_{\D \bar D}$.
Every map $u\in W^{1,2}(D,X)$ has approximate metric differentials almost everywhere
in $D$,  see \cite{LWplateau}, Section 4.  More precisely,
for almost every $z\in D$ there exists a unique seminorm on $\R^2$, denoted by $|du_z(\cdot)|$, such that
\[\aplim_{z'\to z}\frac{|u(z'),u(z)|-|du_z(z'-z)|}{|z'-z|}=0,\]
where $\aplim$ denotes the approximate limit, see \cite{EG}.

If $X$ is a CAT(0)  space then for all $u\in W^{1,2} (D,X)$ and  almost all $z\in D$ the approximate metric differential comes from a possibly degenerate scalar product, \cite{KS}.
Thus, $X$ satisfies the property ET in the terminology of  \cite{LWplateau}, Section 11.

There are several natural definitions of energy for Sobolev maps, see \cite{LWplateau}.
We will only use the {\em Korevaar-Schoen energy}, defined by
$$E^2(u):=\int_D\mathcal{I}^2_{avg}(|du_z|) \, dz\,,$$
where for a seminorm $s$ on $\R^2$ we have set $\mathcal{I}^2_{avg}(s):=\frac{1}{\pi}\int_{S^1}s(\theta)^2 d\theta$.

%Using this energy functional,
  Korevaar and Schoen solved the following Dirichlet problem in \cite{KS}.

\begin{theorem}\label{thm:dirichlet}
 Let $X$ be a CAT(0) space and $v\in W^{1,2}(D,X)$ be a given Sobolev disc.
 Define
 \[W^{1,2}_v:=\{v'\in W^{1,2}(D,X)|\ \tr(v')=\tr(v)\}.\]
 Then there exists a unique  harmonic  disc $u\in W^{1,2}_v$, i.e.
 \[E^2(u)=\inf_{u'\in W^{1,2}_v}E^2(u').\]
 The map $u$ has a representative which is locally Lipschitz continuous in $D$.
\end{theorem}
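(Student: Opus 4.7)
My plan is to obtain existence and uniqueness together by a convexity argument exploiting the CAT(0) target, and to obtain the interior Lipschitz regularity separately by an elliptic-type argument based on subharmonic comparison.

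For existence and uniqueness, given $u_0,u_1\in W^{1,2}_v$, I would form the pointwise midpoint disc $m(z)$, which is well-defined and measurable because geodesics in the CAT(0) space $X$ are unique. Comparing approximate metric differentials via the CAT(0) parallelogram inequality
\[|m(z),w|^2\;\leq\;\tfrac{1}{2}|u_0(z),w|^2+\tfrac{1}{2}|u_1(z),w|^2-\tfrac{1}{4}|u_0(z),u_1(z)|^2\]
applied at nearby points $w$, one checks that $m\in W^{1,2}(D,X)$ and, after integration and averaging over directions, obtains a strict-convexity inequality for the Korevaar--Schoen energy along the family of geodesic midpoints, schematically
\[E^2(m)\;\leq\;\tfrac{1}{2}E^2(u_0)+\tfrac{1}{2}E^2(u_1)-c\int_D|u_0(z),u_1(z)|^2\,dz.\]
Preservation of the trace under pointwise midpointing (since midpoints are jointly $1$-Lipschitz in their arguments and the traces of $u_0,u_1$ agree) gives $m\in W^{1,2}_v$. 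Applied to a minimizing sequence $\{u_n\}\subset W^{1,2}_v$, the inequality forces the $L^2$-distances $\int_D|u_n(z),u_k(z)|^2\,dz$ to vanish as $n,k\to\infty$, so $u_n$ converges in $L^2(D,X)$ to some $u$. Lower semicontinuity of $E^2$ under $L^2$-convergence together with preservation of the trace in the limit then yield $u\in W^{1,2}_v$ with $E^2(u)=\inf_{W^{1,2}_v}E^2$, and the same inequality applied to two minimizers forces them to coincide.

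For the local Lipschitz regularity, my plan is to use that for a harmonic map $u$ into a CAT(0) space the composition $\varphi_p(z):=|u(z),p|^2$ with the convex function $|\cdot,p|^2$ is weakly subharmonic on $D$ for every $p\in X$; this is the same subharmonic-composition fact from \cite{F} that the introduction flags as a central tool. Mean value inequalities applied to $\varphi_p$ yield oscillation bounds on $u$. Combined with a Caccioppoli-type estimate showing the energy density is itself subharmonic in a suitable weak sense, a Moser iteration produces an interior $L^\infty$-bound on the energy density, and from there the local Lipschitz estimate for a continuous representative of $u$.

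The main obstacle is the Lipschitz regularity. The convexity argument for existence and uniqueness is conceptually clean once one has careful control of pointwise midpoints of Sobolev discs. By contrast, the regularity step requires lifting the full elliptic regularity machinery, subharmonic comparison, Caccioppoli estimates and Moser iteration, to the metric-valued Sobolev setting where only approximate metric differentials and the property ET are available, and this is where the technical heart of the Korevaar--Schoen theory lies.
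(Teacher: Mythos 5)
The paper does not prove this statement at all: it is imported verbatim from Korevaar--Schoen \cite{KS} (``Korevaar and Schoen solved the following Dirichlet problem''), so there is no internal proof to compare against, and any assessment of your proposal has to be against the original \cite{KS} argument. Your existence/uniqueness outline is essentially the correct Korevaar--Schoen strategy: convexity of $E^2$ along pointwise geodesic interpolation via the CAT(0) quadrilateral inequality, applied to a minimizing sequence. One imprecision worth flagging: what the parallelogram inequality actually yields at the level of differentials is a correction term of the form $\tfrac14\int_D|\nabla d_{01}|^2\,dz$ with $d_{01}(z)=|u_0(z),u_1(z)|$, not directly $c\int_D|u_0(z),u_1(z)|^2\,dz$; to pass to your displayed inequality you must invoke the Poincar\'e inequality for the scalar function $d_{01}$, which has zero trace precisely because $\tr(u_0)=\tr(u_1)$. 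You mention trace equality but not this step, and it is exactly where the boundary condition enters the uniqueness argument. Similarly, ``preservation of the trace in the limit'' is not automatic under $L^2$-convergence and needs the uniform energy bound along the minimizing sequence together with the \cite{KS} trace theory.

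The genuine gap is in the regularity part. Your plan hinges on ``a Caccioppoli-type estimate showing the energy density is itself subharmonic in a suitable weak sense,'' followed by Moser iteration. Subharmonicity of the energy density of a harmonic map is a Bochner--Eells--Sampson identity; it is available for smooth targets of non-positive curvature but is \emph{not} established for general CAT(0) metric space targets, where one only has approximate metric differentials and no second-order calculus. This is not a technicality one can expect to ``lift'': it remains essentially open in this generality. Korevaar--Schoen avoid it entirely; their interior Lipschitz estimate rests instead on the weak differential inequality $\Delta\,|u(\cdot),p|^2\ge 2|\nabla u|^2$ (a consequence of comparison maps built by geodesic interpolation toward $p$), combined with monotonicity of the scaled energy on balls and an iteration on oscillations, producing a Lipschitz constant controlled by the total energy and the distance to $\partial D$. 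So the first half of your proposal is the right route with a missing Poincar\'e step, while the second half, as stated, relies on an unavailable ingredient and would need to be replaced by the comparison-map/monotonicity argument of \cite{KS}.
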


We will use the following result, a special case of Theorem 2 (b) in \cite{F}.

\begin{theorem} \label{thm:restrict}
Let $X$ be a CAT(0) space and let $u:D\to X$ be a harmonic map.  Let $f :X\to \mathbb R$ be a continuous,
convex function.  Then the composition $f\circ u$ is subharmonic on $D$.
\end{theorem}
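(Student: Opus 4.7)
I would prove Theorem \ref{thm:restrict} by a first-variation argument: the energy-minimizing property of $u$ combined with the gradient flow of $-f$ on the CAT(0) target yields the weak Laplacian inequality for $f\circ u$. By Theorem \ref{thm:dirichlet}, $u$ has a locally Lipschitz representative; since every continuous convex function on a CAT(0) space is locally Lipschitz on bounded sets, the composition $f\circ u$ is locally Lipschitz on $D$, hence continuous and classically weakly differentiable almost everywhere. Subharmonicity of $f\circ u$ then reduces to establishing
\[\int_D (f\circ u)\,\Delta\eta\,dz\geq 0\]
for every non-negative test function $\eta\in C_c^\infty(D)$.

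For the competitor, I would use that on a complete CAT(0) space the gradient flow $\Phi_s\colon X\to X$ of $-f$ exists as a 1-Lipschitz semigroup. Given $\eta$ as above and $t>0$ small, define $u_t(z):=\Phi_{t\eta(z)}(u(z))$. Because $\eta$ is compactly supported, $u_t$ coincides with $u$ near $\partial D$ and $\tr(u_t)=\tr(u)$; moreover, the 1-Lipschitz property of $\Phi_s$ together with a chain-rule type bound for the approximate metric differential places $u_t$ in $W^{1,2}(D,X)$. The minimizing property of $u$ from Theorem \ref{thm:dirichlet} then yields $E^2(u)\leq E^2(u_t)$. A first-order expansion of $E^2(u_t)$ in $t$, using the almost-everywhere chain rule for the locally Lipschitz composition $f\circ u$, should give
\[E^2(u_t)=E^2(u)-2t\int_D \nabla\eta\cdot\nabla(f\circ u)\,dz+O(t^2),\]
so sending $t\to 0^+$ produces $\int_D \nabla\eta\cdot\nabla(f\circ u)\,dz\leq 0$; integration by parts then gives the desired weak subharmonicity.

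The \emph{main obstacle} is the first-variation expansion itself. The subgradient $\nabla f$ of a continuous convex function on a general CAT(0) space is only set-valued, defined through one-sided directional derivatives, and the approximate metric differential $du_z$ of a Sobolev map with metric target is a seminorm rather than a linear map. Controlling $|du_t(v)|^2$ in terms of $|du(v)|^2$ with a cross-term identifiable almost everywhere with $\nabla\eta\cdot\nabla(f\circ u)$ therefore needs careful analysis in the Korevaar--Schoen framework. A natural route is to combine the 2-convexity of $d^2$ on $X$ with the quadratic expansion of $d(\Phi_s(x),\Phi_s(y))$ in $s$ to pin down the cross-term, possibly after approximating $f$ by smoother convex functions so that classical chain rules apply.
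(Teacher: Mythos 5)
The paper does not prove this statement at all: it is imported verbatim as a special case of Theorem~2(b) in Fuglede's paper \cite{F}, so the only ``proof'' in the text is that citation. Your plan --- build a competitor by flowing $u$ along the gradient flow of $-f$ with a cutoff, invoke energy minimality, and read off $\int_D \nabla\eta\cdot\nabla(f\circ u)\,dz\le 0$ from the first variation --- is the natural strategy and is close in spirit to how such results are actually established (in the smooth case the identity $\delta E(-\eta\,\nabla f\circ u)=0$ gives exactly $\int\nabla\eta\cdot\nabla(f\circ u)=-\int\eta\sum_i\operatorname{Hess}f(du(e_i),du(e_i))\le 0$). But as submitted it is a plan, not a proof: the step you yourself flag as ``the main obstacle'' --- extracting the signed cross term $-2t\int\nabla\eta\cdot\nabla(f\circ u)$ from $E^2(u_t)$ --- is the entire analytic content of the theorem. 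The $1$-Lipschitz property of $\Phi_s$ together with the triangle inequality only yields $|du_t|\le |du|+t|\nabla\eta|\cdot(\text{speed of the flow})$, whose square produces a cross term of the \emph{wrong} (unsigned) form; to get the signed term one needs a genuinely first-order tool such as the evolution variational inequality $\tfrac12\tfrac{d}{ds}d^2(\Phi_s(x),y)\le f(y)-f(\Phi_s(x))$, or the first-variation formula for the Korevaar--Schoen energy along geodesic homotopies. None of this is supplied.

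There is also a concrete false auxiliary claim: a continuous convex function on a CAT(0) space need \emph{not} be locally Lipschitz, because geodesics need not be extendable --- e.g.\ $f(x)=-\sqrt{1-x}$ on $X=[0,1]$ is continuous and convex but not Lipschitz near $x=1$, and its descending slope is unbounded there. This undermines both your assertion that $f\circ u$ is locally Lipschitz (hence weakly differentiable, which your integration by parts requires) and the implicit finite-speed bound $d(\Phi_{t\eta(z)}(u(z)),u(z))=O(t)$ needed for the $O(t^2)$ remainder. This is repairable --- replace $f$ by its Moreau--Yosida regularizations $f_\lambda$, which are Lipschitz and convex with $f_\lambda\nearrow f$, prove the statement for each $f_\lambda$, and pass to the increasing limit of subharmonic functions --- but the repair must be made explicit, and even then the first-variation computation for Lipschitz convex $f$ remains to be carried out. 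Given that the paper simply defers to \cite{F}, the honest conclusion is that your proposal identifies the right mechanism but does not close the argument.
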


\subsection{Minimal discs}

Let $X$ be a CAT(0) space and $\Gamma$ a Jordan curve of finite length in $X$.
Consider the non-empty set $\Lambda(\Gamma,X)$ of all maps $v\in W^{1,2}(D,X)$
such that $\tr(v)$ is a weakly monotone parametrization of $\Gamma$. The classical Plateau problem
asks for an element of least area in $\Lambda(\Gamma,X)$.
We refer to \cite{LWplateau} for the definition of area, not needed in the sequel, and just state that
for  CAT(0) spaces, such a minimal disc is an element of least energy in   $\Lambda(\Gamma,X)$, \cite{LWplateau}, Theorem 11.4.
%Recall that the area of a Sobolev disc $v$
%is defined by
%\[\a(v):=\int _D\Jac(dv_z)dz\]
%where the Jacobian $\Jac(s)$ of a seminorm $s$ on $\R^2$ is the Hausdorff $2$-measure
%in $(\R^2,s)$ of the Euclidean unit square if $s$ is a norm and zero otherwise.

The following solution of Plateau's problem is  a consequence of
%the Theorem \ref{thm:dirichlet} above;
\cite{GW}, Theorem 1.4; \cite{LWplateau}, Theorem 4.2;   and \cite{LWcurv}, Section 7, see also \cite{Mese}.

\begin{theorem}\label{thm:plateau}
 Let $X$ be a CAT(0) space and $\Gamma\subset X$ a rectifiable Jordan curve.
 Then there exists a map $u\in\Lambda(\Gamma,X)$  of least energy, i.e.
 \[E^2(u)=\inf_{u'\in\Lambda(\Gamma,X)}E^2(u').\]
 Every such map has the following properties.
 \begin{enumerate}
  \item[1.)] $u$ is harmonic.
  \item[2.)] $u$  extends continuously to $\bar D$.
  \item[3.)] $u$ is {\em conformal} in the sense that $|du_z|=\lambda\cdot s_0$ with $\lambda\in L^2(D)$ holds for almost all $z\in D$, where
  $s_0$ is the Euclidean norm. The function $\lambda$ is called the {\em conformal factor} of $u$.
  \item[4.)] The conformal factor $\lambda$ has a log-subharmonic representative.
 % \item[5.)] $u$ minimizes the area among all maps in $\Lambda(\Gamma,X)$.
 \end{enumerate}

\end{theorem}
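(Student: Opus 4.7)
The plan is to carry out the direct method of the calculus of variations in the metric-space framework, and then derive the four structural properties one at a time from variational arguments combined with the CAT(0) geometry of the target.

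\textbf{Existence.} Start with a minimizing sequence $u_n \in \Lambda(\Gamma, X)$. Since $E^2$ is invariant under the conformal automorphisms of $D$, I normalize by fixing three points $z_1, z_2, z_3 \in S^1$ and three distinct points $p_1, p_2, p_3 \in \Gamma$, and require $\tr(u_n)(z_i)=p_i$. A Courant--Lebesgue argument applied to the bounded-energy sequence yields equicontinuity of the traces as weakly monotone parametrizations of $\Gamma$. Since $X$ is CAT(0), each $u_n$ can be arranged to take values in a bounded set (for instance, by projecting to the closed convex hull of $\Gamma$, which has finite diameter). Rellich-type compactness for Sobolev maps into complete metric spaces then produces a subsequence converging in $L^2(D,X)$ to some $u \in W^{1,2}(D,X)$ whose trace is still a weakly monotone parametrization of $\Gamma$; hence $u \in \Lambda(\Gamma, X)$. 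Lower semicontinuity of the Korevaar--Schoen energy under $L^2$-convergence gives $E^2(u)=\inf_{\Lambda(\Gamma, X)} E^2$.

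\textbf{Harmonicity and continuous extension to $\bar D$.} Any energy minimizer $u \in \Lambda(\Gamma, X)$ is, in particular, a minimizer among competitors sharing its trace $\tr(u)$. By Theorem \ref{thm:dirichlet}, $u$ is therefore harmonic and admits a locally Lipschitz representative on $D$, giving (1). For (2) I would combine the Courant--Lebesgue lemma, applied now near the boundary, with the rectifiability of $\Gamma$: this controls the oscillation of $u$ on small half-discs abutting $S^1$, and together with the equicontinuity of the monotone trace parametrization produces a continuous extension to $\bar D$.

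\textbf{Conformality and log-subharmonicity of $\lambda$.} For (3), I would use inner variations: let $\xi$ be a smooth vector field on $\bar D$ tangent to $S^1$, with flow $\varphi_t$. Then $u \circ \varphi_t \in \Lambda(\Gamma, X)$, and the minimality of $u$ forces $\frac{d}{dt}\bigl|_{t=0} E^2(u \circ \varphi_t)=0$. Writing out this derivative using property ET (the approximate differential $|du_z|$ comes from a scalar product on $\mathbb{R}^2$) produces a holomorphic quadratic differential on $D$---the Hopf differential of $u$---whose vanishing for all admissible $\xi$ is equivalent to conformality; hence $|du_z|=\lambda(z)\cdot s_0$ a.e., with $\lambda \in L^2(D)$ because $E^2(u)<\infty$. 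Property (4) is where I expect the real difficulty to lie. The idea is to exploit the rich supply of convex functions on the CAT(0) space $X$: for every continuous convex $\phi$ on $X$, Theorem \ref{thm:restrict} yields that $\phi \circ u$ is subharmonic on $D$; plugging in distance functions to points and to convex subsets and combining these inequalities with the conformal identity $|du_z|=\lambda \cdot s_0$ one extracts, in the sense of distributions, $\Delta \log \lambda \geq 0$. The main obstacle is precisely this last step: transferring a Bochner-type inequality from the smooth Riemannian picture into the purely metric CAT(0) setting, using convex functions as a substitute for curvature computations, and then selecting an upper semicontinuous (log-subharmonic) representative of $\lambda$.
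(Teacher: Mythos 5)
First, a point of reference: the paper does not prove Theorem \ref{thm:plateau} at all --- it is stated as a compilation of results from \cite{GW}, \cite{LWplateau}, \cite{LWcurv} and \cite{Mese}. So your sketch has to be measured against those sources, and while it follows their broad strategy (direct method with three-point normalization, Courant--Lebesgue, harmonicity via Theorem \ref{thm:dirichlet}, conformality via inner variations and property ET), it contains two genuine gaps.

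The first is the compactness step in the existence argument. ``Rellich-type compactness for Sobolev maps into complete metric spaces'' (Korevaar--Schoen's precompactness theorem) requires the images of the minimizing sequence to lie in a common \emph{compact} set. Projecting onto the closed convex hull of $\Gamma$ only confines the images to a bounded closed convex set, and in a general CAT(0) space --- which need not be proper or locally compact --- such a set is not compact. For the Dirichlet problem this is circumvented by the convexity of the energy along geodesic interpolation of maps (which makes the minimizing sequence Cauchy without any compactness), but that device is unavailable for the Plateau problem, where only the image of the trace is prescribed and the parametrization is free. Handling exactly this obstruction is the content of \cite{GW}, which the paper cites for this reason; as written, your direct method does not close in the non-proper case.

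The second gap is property (4), and your own candor notwithstanding, the proposed route does not work as described. Theorem \ref{thm:restrict} gives subharmonicity of $\phi\circ u$ for convex $\phi$, but $\log\lambda$ is not of the form $\phi\circ u$, and no formal combination of inequalities $\Delta(\phi\circ u)\ge 0$ together with $|du_z|=\lambda\cdot s_0$ yields $\Delta\log\lambda\ge 0$: in the smooth picture the inequality $\Delta\log\lambda\ge -K_X\lambda^2\ge 0$ comes from a Bochner/Gauss-equation computation involving second derivatives of $u$, which has no distributional meaning here. The actual proofs (\cite{Mese}; \cite{LWcurv}, Section 7) replace this by quantitative comparison arguments --- weighted sums of squared distance functions, monotonicity of the order function, and an identification of the intrinsic metric $\lambda^2(dx^2+dy^2)$ as non-positively curved in Alexandrov's sense, which for a conformal metric is equivalent to weak subharmonicity of $\log\lambda$. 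This is a substantial, self-contained piece of analysis, not a corollary of Theorem \ref{thm:restrict}. The remaining items (1)--(3) of your sketch are essentially correct in outline and match the arguments of \cite{KS} and \cite{LWplateau}.
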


A map $u$ as above will be called a {\em minimal disc} filling $\Gamma$. We agree that the conformal factor
of a minimal disc will always be chosen to be log-subharmonic.

Each minimal disc comes with a nice intrinsic structure  defined in \cite{LWint}. Its relevant properties are summarized in
the following result,  whose proof is just a list of references to \cite{LWint} and \cite{LWcurv}.

\begin{theorem} \label{thm:intrinsic}
 Let $X$ be a CAT(0) space and $\Gamma\subset X$ a  rectifiable Jordan curve.
 Let $u:\bar D\to X$ be a minimal disc filling $\Gamma$ and let the log-subharmonic function $\lambda$
 denote the conformal factor  of $u$.
 Then the completion $Y$ of $\lambda \sd D$ is a CAT(0) space homeomorphic to
 $\bar D$.
 The map $u:\lambda \sd D\to X$ extends to a majorization $v:Y\to X$  of  $\Gamma$.
\end{theorem}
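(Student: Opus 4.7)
The plan is to identify the completion of $\lambda\sd D$ with the \emph{intrinsic minimal filling} $Z_u$ of $u$ constructed in \cite{LWint}, and then to invoke the structural results of \cite{LWint} and \cite{LWcurv} which describe $Z_u$ when the target space is CAT(0).

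First I would recall the construction of $Z_u$ from \cite{LWint}: one puts on $\bar D$ the pseudometric
\[
d_u(z,w)\;:=\;\inf_\gamma\int_a^b |du_{\gamma(t)}(\dot\gamma(t))|\,dt,
\]
where the infimum runs over Lipschitz paths $\gamma:[a,b]\to\bar D$ joining $z$ and $w$, and $Z_u$ is the associated metric quotient. Since $u$ is conformal with conformal factor $\lambda$, for almost every $z\in D$ the approximate metric differential is $\lambda(z)$ times the Euclidean norm. Hence for curves staying in $D$ the $u$-length agrees with the $\lambda$-length in the sense of \eqref{eq:length}, and the restriction of $d_u$ to $D$ coincides with the metric $d_\lambda$ on $\lambda\sd D$. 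Therefore the natural map $\lambda\sd D\to Z_u$ is an isometric embedding with dense image, which identifies $Z_u$ with the completion $Y$ of $\lambda\sd D$.

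Next, I would quote the relevant structural statements. From \cite{LWint} one has that, for a minimal disc $u$ filling a rectifiable Jordan curve, $Z_u$ is homeomorphic to $\bar D$, the quotient map $P:\bar D\to Z_u$ sends $S^1$ homeomorphically onto the boundary circle of $Z_u$, and $u$ factors as $u=v\circ P$ with $v:Z_u\to X$ being $1$-Lipschitz. From \cite{LWcurv} one has that if $X$ is CAT(0), then $Z_u$ is CAT(0). Combining these gives that $Y=Z_u$ is a CAT(0) space homeomorphic to $\bar D$.

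Finally I would check the majorization property. Because $u$ extends continuously to $\bar D$ by Theorem \ref{thm:plateau} and its trace is a weakly monotone parametrization of $\Gamma$, the composition $v\circ P|_{S^1}$ parametrizes $\Gamma$; together with the fact that $P$ sends $S^1$ homeomorphically onto the boundary Jordan curve $\Gamma'\subset Y$, this forces $v|_{\Gamma'}$ to be a bijection onto $\Gamma$. Arc length is preserved because the $u$-length of the boundary, computed using $|du|$, equals the length of $\Gamma$. The main technical point of the argument, which is exactly the content of the cited theorems, is that minimality and conformality of $u$ together with CAT(0) geometry of $X$ prevent the quotient $P$ from collapsing any nondegenerate subset in the interior and force nonpositive curvature to propagate from $X$ through $v$ to $Z_u$; once those are accepted as inputs, the identification with the completion of $\lambda\sd D$ and the verification of the majorization axioms are bookkeeping.
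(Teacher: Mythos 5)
Your overall strategy is the same as the paper's: identify the completion $Y$ of $\lambda\sd D$ with the intrinsic space $Z_u$ of \cite{LWint} and then import the structural results of \cite{LWint} and \cite{LWcurv}. The gap is in the identification itself. From conformality you correctly get that every Lipschitz curve contained in $D$ has the same length in $\lambda\sd D$ as in $Z_u$, and hence that the natural map $P:\lambda\sd D\to Z_u$ is short. But you then conclude that $P$ is an \emph{isometric embedding}, and that does not follow from length preservation alone: the distance in $Z_u$ between two interior points is an infimum over curves in all of $Z_u$, and a priori there could be shortcuts running through the set that $P$ collapses --- the preimage of the boundary circle of $Z_u$ need not be just $S^1$, since $u$ maps a possibly larger closed set $\bar D\setminus D_0$ onto $\Gamma$, and on $D\setminus D_0$ the map $P$ may strictly decrease distances. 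Ruling out such shortcuts is precisely the non-trivial input: the paper uses that $Z_0=Z\setminus\partial Z$ is a length space in the induced metric (\cite{LWcurv}, Theorems 1.2 and 1.3) and that $P:\lambda\sd D_0\to Z_0$ is an isometry while $P:\lambda\sd D\to Z$ is short (\cite{LWcurv}, Section 9); from this it deduces that the inclusion $\lambda\sd D_0\to\lambda\sd D$ is an isometric embedding, and it then needs the extra observation that $D\setminus D_0$ has measure zero (because $\lambda>0$ almost everywhere and $u(\bar D\setminus D_0)\subset\Gamma$), hence $D_0$ is dense in $D$, to conclude that the completions of $\lambda\sd D_0$ and of $\lambda\sd D$ both coincide with $Z$. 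None of this bookkeeping is optional, and your proposal omits the set $D_0$ entirely.

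A secondary imprecision: the pseudometric on $\bar D$ is not literally $\inf_\gamma\int|du_{\gamma(t)}(\dot\gamma(t))|\,dt$ over Lipschitz paths, since $|du|$ is only defined almost everywhere and a single path is a null set; \cite{LWint} uses a more robust definition, so even writing down $d_u$ requires care. Note also that the paper obtains the CAT(0) property and completeness of $Y$ independently of the identification with $Z_u$, via Reshetnyak's Theorem \ref{thm:resh} together with \cite{LWcurv}, Proposition 12.1; your route through $Z_u$ would also work, but only once the isometric identification is actually established.
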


\begin{proof}
By Theorem \ref{thm:resh}, the space $\lambda \sd D$ is non-positively curved and by \cite{LWcurv}, Proposition 12.1  its completion
$Y$ is a CAT(0) space.

In \cite{LWint} an intrinsic structure on the disc $\bar D$  canonically defined by the map $u$ is investigated.
The properties of the arising metric space $Z$ are  investigated in \cite{LWint} and
 summarized in \cite{LWcurv}, Theorem 6.2.    The space $Z$ is   a geodesic space, homeomorphic to $\bar D$.
There exist canonical maps $P:\bar D\to  Z$ and $\bar u:Z\to X$ such that $\bar u$ is a majorization of $\Gamma$ and such that
$u=\bar u \circ P$.

The space $Z$ is a CAT(0) space and  the complement  $Z_0=Z\setminus \partial Z$ of the boundary circle $\partial Z$ in $Z$
is a length space, with respect to the metric induced from $Z$, see \cite{LWcurv}, Theorems 1.2 and 1.3.
In particular, $Z$ is the completion of $Z_0$.

The   preimage $ D_0=P^{-1} (Z_0)$ is a topological disc in $D$, \cite{LWcurv}, Theorem 6.2.
Moreover, the map $P:\lambda \sd D \to Z$   is short and the map  $P:\lambda \sd D_0 \to Z_0$  is an isometry, \cite{LWcurv}, Section 9.
Since the canonical inclusion $\lambda \sd D_0 \to \lambda \sd D$ is short, the last two properties of the map $P$ imply
that the inclusion $\lambda \sd D_0 \to \lambda \sd D$ is an isometric embedding.

The log-subharmonic function $\lambda$ is positive almost everywhere on
$D$ and the map $u$ sends $\bar D \setminus D_0$ onto $\Gamma$. Therefore, the area of   $D\setminus D_0$ is $0$.
Hence $D_0$ is dense in $D$. Therefore, the completion of $\lambda \sd D_0$ (which is via the map $v$ isometric to $Z$)
and the completion $Y$ of $\lambda \sd D$ coincide.

This finishes the proof of the theorem.
\end{proof}

\subsection{Proof of the main theorem}
We can now  assemble all the pieces to a proof  of our main theorem.
\begin{proof}[Proof of Theorem \ref{thm:main}]
Let there be given a CAT(0) space $X$ and a continuous, convex function $f:X\to \R$,   bounded from below.

The positive, continuous  function $e^f$ is bounded away from $0$, hence the conformally changed space $e^f\sd X$ is a
complete length space.

 Consider an arbitrary rectifiable  Jordan curve $\Gamma \subset   e^f\sd  X$.  Applying Proposition \ref{prop:major} we  only need to majorize $\Gamma$ by
 some CAT(0) space.  In order to find this majorizing space, we identify $\Gamma$ with a Jordan curve $\hat \Gamma =\id ^{-1} _{e^f} (\Gamma)$
 in $X$. Since $e^f$ is bounded on $\Gamma$, the Jordan curve $\hat \Gamma$ is rectifiable in $X$.

 Consider a minimal filling $u:\bar D\to X$ of the Jordan curve $\hat \Gamma$ provided by Theorem
\ref{thm:plateau}. Let the log-subharmonic function $\lambda$ be the conformal factor of $u$.   Denote by $Y$ the   completion of $\lambda \sd D$.  Let $v:Y\to X$ be the extension
of the map $u:\lambda \sd D\to X$ to $Y$, which is   a   majorization of $\hat \Gamma$ due to
Theorem \ref{thm:intrinsic} .

%the space $Z$ is CAT(0), and the canonical map $u:\lambda \cdot D \to X$ extends to a majorization
%$\bar u:Z\to X$ of $\Gamma$.

The function $e^f\circ v :Y\to \R$ is continuous. Due to   Theorem \ref{thm:restrict}, the function $e^f\circ u$ is log-subharmonic on $D$.
Hence, by Lemma \ref{lem:cat} the conformally changed space $(e^f\circ v ) \sd Y$ is CAT(0).

Since $v:Y\to X$ provides a majorization of  $\hat \Gamma$, the map
$$\id _{e^f} \circ v:(e^f\circ v)\sd Y\to e^f \sd X$$
provides a majorization of $\Gamma$ by the CAT(0) space $(e^f\circ v ) \sd Y$.

Since $\Gamma$ was arbitrary, Proposition \ref{prop:major} shows that $e^f\sd X$ is CAT(0).
This  finishes the proof of Theorem \ref{thm:main}.
\end{proof}

\end{document}